\newcommand{\ZZ}{\mathbb{Z}}
\newcommand{\NN}{\mathbb{N}}
\newcommand{\calC}{\mathcal{C}}
\newcommand{\calL}{\mathcal{L}}
\newcommand{\calO}{\mathcal{O}}
\newcommand{\calB}{\mathcal{B}}
\newcommand{\calA}{\mathcal{A}}
\newcommand{\scrR}{\mathscr{R}}
\newcommand{\scrA}{\mathscr{A}}
\newcommand{\bfB}{\mathbf{B}}
\newcommand{\vphi}{\varphi}
\newcommand{\pl}[1]{\foreignlanguage{polish}{#1}}
\newcommand{\abs}[1]{\lvert {#1} \rvert}
\newcommand{\ind}[1]{{\mathds{1}_{{#1}}}}
\newtheorem*{theorem*}{Theorem}
\newtheorem*{lemma*}{Lemma}
\newcounter{thm}
\theoremstyle{definition}
\title[Weak-type estimates for $1$-regular sequences]
{Weak type $(1, 1)$ estimates for maximal functions along $1$-regular sequences of integers}
\author{Bartosz Trojan}
\address{
	\pl{
	Bartosz Trojan\\
	The Institute of Mathematics\\
	Polish Academy of Sciences\\
	ul. \'Sniadeckich 8\\
	00-656 Warszawa\\
	Poland}
}
\email{btrojan@impan.pl}
\subjclass[2020]{Primary: 37A44, 37A46}
\keywords{Ergodic averages, Weak type estimates, 1-regular sequences}
\begin{document}
\selectlanguage{english}

\begin{abstract}
	We show the pointwise convergence of the averages
	\[
		\calA_N f(x) = \frac{1}{\# \bfB_N} \sum_{n \in \bfB_N} f(x + n) 
	\]
	for $f \in \ell^1(\ZZ)$ where $\bfB_N = \bfB \cap [1, N]$, and $\bfB$ is a $1$-regular sequence of integers,
	for example $\bfB = \{\lfloor n \log n \rfloor : n \in \NN\}$.
\end{abstract}

\maketitle
Let $(X, \calB, \mu, T)$ be an ergodic dynamical system, that is $(X, \calB, \mu)$ is a $\sigma$-finite measure space
with a measurable and measure preserving transformation $T: X \rightarrow X$. The classical Birkhoff theorem
\cite{birk} says that for any $f \in L^p(X, \mu)$, $p \geq 1$, the limit
\[
	\lim_{N \to \infty} \frac{1}{N} \sum_{n = 1}^N f\big(T^n x\big)
\]
exists for $\mu$-almost all $x \in X$. This classical result motivated others to study ergodic averages of the form
\begin{equation}
	\label{eq:4}
	\frac{1}{N} \sum_{n = 1}^N f\big(T^{a_n} x\big)
\end{equation}
for various unbounded subsets of integers $\{a_n : n \in \NN\}$ and $L^p(X, \mu)$, $p \in [1, \infty)$.
In particular, in his PhD thesis Wierdl, see \cite[Theorem 4.4]{wrl0}, proved that the averages corresponding to
$a_n = \lfloor n (\log n) \rfloor$ converge $\mu$-almost everywhere for functions in $L^p(X, \mu)$, $p \in (1, \infty)$.
On the other hand, Rosenblatt in \cite[Remark 27]{ros}, showed that for every aperiodic dynamical system
$(X, \calB, \mu, T)$ there is a function in $L^p(X, \mu)$, $p \geq 1$, such that the averages along the orbit of the
sequence $a_n = n \lfloor \log n \rfloor$ do not converge on a set of positive measure.
The latter motivated Rosenblatt and Wierdl to formulate \cite[Conjecture 4.1]{RW} which says that for all subsets of
integers $\{a_n : n \in \NN\}$ having zero Banach density, and any aperiodic dynamical system $(X, \calB, \mu, T)$
there is a function in $L^1(X, \mu)$ so that the ergodic averages \eqref{eq:4} do not converge $\mu$-almost
everywhere. 

However, in \cite{bucz1}, Buczolich constructed inductively a sophisticated example disproving the conjecture. 
Nowadays, thanks to Mirek's article \cite{mm1}, a wide class of concrete examples modeled on certain $c$-regular sequences,
$c \in (1, 30/29)$ is known, for which the Rosenblatt--Wierdl conjecture does not hold.
In this paper, we show how to complete the picture
by covering the case $c = 1$, thus answering the question posed in \cite{mm1}. In particular, we obtain that the ergodic
averages for the sequence $a_n = \lfloor n \log n \rfloor$ converge $\mu$-almost everywhere for all functions in 
$L^1(X, \mu)$, which is in a strong contrast with Rosenblatt's observation \cite[Remark 27]{ros}. 

Let us highlight the difference between discrete and continuous averaging operators. In 1980s Bourgain \cite{bou}
established the pointwise convergence of the ergodic averages along the sequence $a_n = n^2$ for functions
in $L^p(X, \mu)$, $p > 1$. At that time it was not clear what one should expect at the endpoint $p = 1$. Their continuous
counterpart has a form
\begin{equation}
	\label{eq:6}
	\frac{1}{t} \int_0^t f(x - y^2) {\: \rm d} y.
\end{equation}
Observe that by a simple change of variables, we get
\begin{align*}
	\bigg| \frac{1}{t} \int_0^t f(x - y^2) {\: \rm d}y\bigg|
	&=
	\bigg| \frac{1}{2 t} \int_0^{t^2} f(x - y) \frac{{\rm d}y}{\sqrt{y}} \bigg| \\
	&\leq
	\sum_{n = 0}^\infty \frac{1}{2t} \int_{2^{-n-1} t^2}^{2^{-n} t^2} |f(x - y)| \frac{{\rm d}}{\sqrt{y}} \\
	&\leq
	\sum_{n = 0}^\infty 2^{-\frac{n}{2}} \cdot
	\sup_{r > 0}{\frac{1}{r} \int_0^r |f(x - y)| {\: \rm d} y}.
\end{align*}
Therefore, by the classical Hardy--Littlewood maximal inequality, the maximal function corresponding to the averages 
\eqref{eq:6} satisfies weak type $(1, 1)$ estimates. It might suggest that the same holds true for the discrete case,
however in 2010 Bucholicz and Mauldin \cite{BM} showed that the maximal function corresponding to the averages
along $a_n = n^2$ is \emph{not} weak type $(1, 1)$. This illustrates that the phenomena occurring in discrete
setting may completely differ from what is known for the continuous analogues, mainly due to their arithmetic nature. 

Before we formulate our result, let us recall the class of $c$-regular functions we are interested in.
Denote by $\mathcal L$ a family of slowly varying functions $\ell: [x_0, \infty) \rightarrow (0, \infty)$  such that
\[
	\ell(x)=\exp\Big(\int_{x_0}^x \frac{\vartheta (t)}{t} {\: \rm d} t \Big)
\]
where $\vartheta\in \calC^2([x_0, \infty))$ is a real function satisfying
\[
	\lim_{x\to\infty}
	\vartheta(x)=0,\qquad \lim_{x\to\infty}x\vartheta'(x)=0,\qquad
	\lim_{x\to\infty}x^2\vartheta''(x)=0.
\]
In $\calL$ we distinguish a subfamily $\calL_0$ consisting of slowly varying functions
$\ell:[x_0, \infty) \rightarrow (0, \infty)$ such that
\begin{align*}
	\ell(x)=\exp\Big(\int_{x_0}^x\frac{\vartheta (t)}{t} {\: \rm d} t\Big)
\end{align*}
where  $\vartheta \in \calC^2([x_0, \infty))$ is a positive decreasing real function satisfying
\begin{align*}
	\lim_{x\to\infty} \vartheta(x)=0,\qquad
	\lim_{x\to\infty} \frac{x\vartheta'(x)}{\vartheta(x)}=0, \qquad
	\lim_{x\to\infty}\frac{x^2\vartheta''(x)}{\vartheta(x)}=0,
\end{align*}
and for every $\varepsilon>0$ there is a constant $C_{\varepsilon}>0$ such that
$1\le C_{\varepsilon}\vartheta(x)x^{\varepsilon}$ and $\lim_{x\to\infty}\ell(x)=\infty$.
Finally, for every $c \in [1, 30/29)$ let $\mathcal R_c$ be the family of increasing, convex, regularly-varying functions
$h:[x_0, \infty) \rightarrow [1, \infty)$ of the form
\[
	h(x)=x^c L(x)
\]
where $L \in \mathcal L$. If $c=1$ we impose that $L \in \mathcal L_0$.

We are interested in subsets of integers $\bfB$ having a form
\[
	\bfB= \big\{ \lfloor h(m) \rfloor : m \in \NN \big\}
\]
where $h$ is a fixed function belonging to $\scrR_c$. In view of \cite[Lemma 2.14]{mm1}, there is $\delta > 0$ so that
\[
	\# \bfB_N = \vphi(N) \Big(1 + \calO\big(N^{-\delta}\big)\Big) 
\]
where $\bfB_N = \bfB \cap [1, N]$, and $\vphi$ is the inverse function to $h$. In this paper, we study the pointwise
convergence of the ergodic averages
\[
	\scrA_N f(x) = \frac{1}{\# \bfB_N} \sum_{n \in \bfB_N} f\big(T^n x \big).
\]
In view of \cite[Section 4]{mm1}, it is enough to show the weak type $(1, 1)$ maximal estimates for $\scrA_N$. Thanks
to the Calder\'on transference principle \cite{cald} we reduce the problem to the model dynamical
system, that is the integers $\ZZ$ with the counting measure and the shift operator. In this context, for a function
$f \in \ell^1(\ZZ)$ we set
\[
	\calA_N f(x) = \frac{1}{\# \bfB_N}
	\sum_{n \in \bfB_N} f(x + n).
\]
Our aim is to show the following theorem.
\begin{theorem*}
	Let $c = 1$. There is $C > 0$ so that for all $f \in \ell^1(\ZZ)$, 
	\begin{equation}
		\label{eq:5}
		\sup_{\lambda > 0}{
		\lambda \cdot
		\#\Big\{x \in \ZZ : \sup_{N \in \NN}{|\calA_N f(x)|} > \lambda\Big\} }
		\leq
		C \|f\|_{\ell^1}.
	\end{equation}
\end{theorem*}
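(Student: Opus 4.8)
The plan is to combine the circle method with a Calderón--Zygmund decomposition. First reductions: since $h \in \scrR_1$, its inverse $\vphi$ is regularly varying of index $1$, so $\# \bfB_{2^{j+1}} \le C\, \# \bfB_{2^j}$; as $N \mapsto \sum_{n \in \bfB_N} f(x+n)$ is nondecreasing for $f \ge 0$ and $\# \bfB_N \ge \# \bfB_{2^j}$ for $N \ge 2^j$, it follows that $\sup_N |\calA_N f| \le C \sup_{j \in \NN} \calA_{2^j} |f|$, so it suffices to prove \eqref{eq:5} for the lacunary maximal operator applied to $f \ge 0$. Fix $\lambda > 0$ and perform a Calderón--Zygmund decomposition $f = g + b$ at height $\lambda$: $b = \sum_Q b_Q$ with the $Q$ pairwise disjoint dyadic intervals, $\supp b_Q \subseteq Q$, $\sum_y b_Q(y) = 0$, $\|b_Q\|_{\ell^1} \le C\lambda\, \# Q$, $\sum_Q \# Q \le C\lambda^{-1}\|f\|_{\ell^1}$, while $\|g\|_{\ell^\infty} \le C\lambda$ and $\|g\|_{\ell^1} \le \|f\|_{\ell^1}$. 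The good part is handled by the $\ell^2$ theory: the bound $\|\sup_j |\calA_{2^j} g|\|_{\ell^2} \le C\|g\|_{\ell^2}$ belongs to the (classical, $p>1$) theory of averages along $c$-regular sequences, and since $\|g\|_{\ell^2}^2 \le \|g\|_{\ell^\infty}\|g\|_{\ell^1} \le C\lambda\|f\|_{\ell^1}$, Chebyshev gives the required bound for $g$.

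It remains to estimate $\#\{x : \sup_j |\calA_{2^j} b(x)| > \lambda\}$. Throw $\bigcup_Q 3Q$ into the exceptional set, which costs $\le C\lambda^{-1}\|f\|_{\ell^1}$. For a bad interval $Q$ with $\# Q = 2^{s}$ and a scale $j \le s$, the operator $\calA_{2^j}$ averages over integers $\le 2^j \le \# Q$, so $\supp \calA_{2^j} b_Q \subseteq 3Q$ and contributes nothing off the exceptional set; hence only the scales $j > s(Q)$ are relevant, and for those the cancellation $\sum_y b_Q(y) = 0$ must be exploited. The difficulty is that the kernel of $\calA_{2^j}$ is a normalised sum of Dirac masses, so this cancellation is invisible on the spatial side and has to be manufactured by Fourier analysis.

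Write $\frakm_j$ for the Fourier multiplier of $\calA_{2^j}$. Using the regularity of $h$, the identity $\ind{\bfB}(n) = \lfloor \vphi(n+1)\rfloor - \lfloor\vphi(n)\rfloor$ for $n$ large, summation by parts and van der Corput estimates, one obtains an approximation of the form
\[
	\frakm_j(\xi) = \Phi_j(\xi) + \sum_{1 < q \le \calQ_j}\ \sum_{\substack{1 \le a \le q\\ (a,q)=1}} \mathfrak{g}_q(a)\, \Phi_j(\xi - a/q)\, \chi_q(\xi - a/q) + \calE_j(\xi),
\]
where $\Phi_j$ is the Fourier transform of a smooth, $\ell^1$-normalised kernel concentrated at scale $\sim 2^j$ (so that $\sup_j |\Phi_j * \cdot|$ is dominated by the discrete Hardy--Littlewood maximal operator $M_{\mathrm{HL}}$), $\chi_q$ localises to a $q^{-1}$-neighbourhood of $0$, the Gauss-sum-type coefficients obey $|\mathfrak{g}_q(a)| \le C q^{-\rho}$ for some $\rho > 0$, the denominator threshold $\calQ_j$ grows only slowly (a small power of $\log 2^j$, its size dictated by the $\mathcal L_0$-conditions on $\vartheta$), and $\calE_j$ is an error term whose kernel decays in $j$.

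The main term is then immediate: $\sup_j |\Phi_j * b| \le C\, M_{\mathrm{HL}} b$, which is of weak type $(1,1)$, and $\|b\|_{\ell^1} \le C\|f\|_{\ell^1}$ — no cancellation is needed here. For the arithmetic terms and the error we combine: (a) the mean-zero property, which for a kernel $k$ writes $k * b_Q(x) = \sum_{y \in Q}\bigl(k(x-y) - k(x-c_Q)\bigr) b_Q(y)$ and yields a gain once $k$ satisfies a Hörmander-type condition at scale $\# Q$ (obtained by comparison with the underlying continuous kernels, legitimate because the relevant scales $2^j > \# Q$ lie well above the scale $1$ at which discreteness matters); (b) for each $q$, stripping the modulations $n \mapsto e^{2\pi i an/q}$ reduces the $q$-block to Hardy--Littlewood-type kernels carrying total mass $\lesssim q \cdot q^{-\rho}$; (c) summation over $q \le \calQ_j$ and over $j$. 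Step (c) is the crux: at the endpoint $c = 1$ the exponent $\rho$ is only borderline and $\calQ_j \to \infty$, so $\sum_{q \le \calQ_j} q^{1-\rho}$ need not stay bounded, and one must use the precise rates afforded by $L \in \mathcal{L}_0$ — in particular $x\vartheta'/\vartheta \to 0$, $x^2\vartheta''/\vartheta \to 0$ and $1 \le C_\varepsilon \vartheta(x) x^\varepsilon$ — to keep $\calQ_j$ small enough and the $j$-summation convergent; this is exactly what breaks down for $a_n = n^2$. Each of the three pieces (main term, arithmetic terms, error term) produces an exceptional set of size $\le C\lambda^{-1}\|f\|_{\ell^1}$, which together with $\bigcup_Q 3Q$ and the good part yields \eqref{eq:5}.
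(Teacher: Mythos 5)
Your frame (restriction to lacunary scales, Calder\'on--Zygmund decomposition at height $\lambda$, $\ell^2$ theory for the good part) is the standard one, and it is essentially the framework of Mirek's paper that this note simply plugs into; the paper's own contribution is a single counting lemma. But your treatment of the bad part has a genuine gap, located exactly where that counting lemma is needed. First, a structural remark: the major-arc expansion with Gauss-sum coefficients $\mathfrak{g}_q(a)$, $q$-blocks and a threshold $\calQ_j$ is a template imported from polynomial orbits; for $\bfB=\{\lfloor h(m)\rfloor\}$ with $h(x)=xL(x)$ and $L$ slowly varying there are no rational frequency peaks beyond the trivial one, so the whole discussion of "borderline $\rho$" and $\sum_{q\le\calQ_j}q^{1-\rho}$ is not the mechanism at work here and is in any case unverified. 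More seriously, step (a) fails: the kernel of $\calA_{2^j}$ is a sparse sum of unit Dirac masses with gaps of size about $2^j/\vphi(2^j)\to\infty$, so it satisfies no H\"ormander-type regularity at scale $\#Q$, and the mean-zero trick $k*b_Q(x)=\sum_{y}(k(x-y)-k(x-c_Q))b_Q(y)$ gains nothing, since $k(x-y)-k(x-c_Q)$ is generically a difference of a unit mass and zero. Comparison with "the underlying continuous kernels" does not rescue this: the difference between $K_{2^j}$ and any smooth $\ell^1$-normalised kernel has $\ell^1$ norm comparable to $1$ at every scale, with no decay in $j$; the error $\calE_j$ from van der Corput decays only in the multiplier ($\ell^2\to\ell^2$) norm, which cannot be applied to $b$ because $\|b\|_{\ell^2}$ is not under control. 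This is precisely the discrete-versus-continuous obstruction the paper emphasizes via Buczolich--Mauldin for $a_n=n^2$, where the continuous analogue is weak $(1,1)$ but the discrete maximal function is not; any argument that treats the discrete kernel as a perturbation of a smooth one at the endpoint $p=1$ proves too much.

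What the bad part actually requires (and what Mirek's Section 5 runs on) is an $\ell^2$-type, $TT^*$ argument whose engine is the off-diagonal autocorrelation bound
\begin{equation*}
	K_N*\check K_N(x)\le \frac{C}{N}\qquad\text{for } 0<|x|\le\vphi(N),
\end{equation*}
equivalently a count of the pairs $(k,m)$ with $\vphi(N/4)\le m,k\le\vphi(2N)$ and $|h(k)-h(m)-x|<1$: for fixed $m$ the mean value theorem and $h'(\vphi(N))\simeq N/\vphi(N)$ give about $\vphi(N)/N$ admissible $k$, hence at most $C\vphi(N)^2/N$ pairs. At $c=1$ this is where the $\mathcal{L}_0$ hypotheses genuinely enter (through the behaviour of $\vphi$ and $h'$), not through keeping a count of major arcs small. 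Your sketch contains no estimate of this kind, and without it the summation over scales in your step (c) has nothing quantitative to sum; so as written the proposal does not close.
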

In view of \cite[Section 5]{mm1}, to obtain \eqref{eq:5} it is sufficient to prove \cite[Lemma 5.1]{mm1},
which is the subject of the following lemma. In fact, its proof is valid not only for $c = 1$ but for the whole range
of the parameter $c \in [1, 30/29)$.
\begin{lemma*}
	There is $C > 0$ such that for all $N \in \NN$, and $x \in \ZZ$ such that $0 < \abs{x} \leq \varphi(N)$,
	\begin{equation}
		\label{eq:3}
		K_N * \check{K}_N(x) \leq \frac{C}{N}
	\end{equation}
	where
	\[
		K_N = \frac{1}{\# \bfB_N} \sum_{N/4 < n \leq N} \delta_n \ind{\bfB}(n)
	\]
	and $\check{K}_N(x) = K_N(-x)$.
\end{lemma*}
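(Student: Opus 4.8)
The plan is to unfold the convolution into a counting problem and then exploit the convexity and regular variation of $h$. Writing $n_i = \lfloor h(m_i) \rfloor$, one checks directly that
\[
	K_N * \check K_N(x)
	=
	\frac{1}{(\# \bfB_N)^2}
	\,\#\big\{ (m_1, m_2) \colon \varphi(N/4) < m_i \le \varphi(N) + 1,\ \lfloor h(m_1) \rfloor - \lfloor h(m_2) \rfloor = x \big\},
\]
so, since $\# \bfB_N \asymp \varphi(N)$ by the asymptotics recalled above, it is enough to bound the cardinality on the right by $\calO\big(\varphi(N)^2 / N\big)$. By the symmetry $K_N * \check K_N(x) = K_N * \check K_N(-x)$ we may take $x > 0$; then $m_1 > m_2$, and setting $k := m_1 - m_2 \ge 1$ the relation $\lfloor h(m_2+k) \rfloor - \lfloor h(m_2) \rfloor = x$ forces $h(m_2+k) - h(m_2) \in (x-1, x+1)$. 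Thus the cardinality equals $\sum_{k \ge 1} \calN_k$, where $\calN_k$ counts the integers $m$ with $\varphi(N/4) < m$, $m+k \le \varphi(N)+1$ and $h(m+k) - h(m) \in (x-1,x+1)$.

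Next I would estimate each $\calN_k$ by convexity. The map $m \mapsto h(m+k) - h(m)$ is strictly increasing with derivative $\int_m^{m+k} h''$, so the admissible $m$ form an interval and $\calN_k \le 1 + 2\big(\inf_m \int_m^{m+k} h''\big)^{-1}$. This is where the membership $L \in \calL_0$ enters: it yields the lower bound $h''(t) \gtrsim \vartheta(t) L(t)/t$ on the relevant range (for $c > 1$ one has instead $h''(t) \gtrsim t^{c-2} L(t)$, which is larger, so $c = 1$ is the delicate endpoint), whence $\calN_k \lesssim 1 + \varphi(N)\big(k\,L(\varphi(N))\vartheta(\varphi(N))\big)^{-1}$. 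Moreover $h(m+k) - h(m) \asymp k\,h'(\varphi(N)) \asymp kN/\varphi(N)$ on our range, so the constraint $h(m+k)-h(m) \in (x-1,x+1)$ together with $0 < x \le \varphi(N)$ confines $k$ to an interval of length $\lesssim \varphi(N)^2/N + 1$. Summing over admissible $k$: the count of such $k$ already contributes $\lesssim \varphi(N)^2/N$, while the remaining sum $\sum_k \varphi(N)\big(k\,L(\varphi(N))\vartheta(\varphi(N))\big)^{-1}$ must be absorbed using $L(\varphi(N)) \to \infty$ and the hypothesis $1 \le C_\varepsilon\,\vartheta(x)\,x^\varepsilon$, which bounds $\vartheta$ from below against the harmonic factor that the sum over $k$ produces.

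The hard part will be the short-step regime --- the finitely many small $k$ for which $k\,L(\varphi(N))\vartheta(\varphi(N))$ stays bounded, so that the convexity estimate collapses to the trivial bound $\calN_k \lesssim \varphi(N)$ and one loses a factor of order $L(\varphi(N))$. Recovering this factor requires a more arithmetic input: that the fractional parts $\{h(m)\}$ are equidistributed on a dyadic block, which for $h \in \scrR_c$ follows from van der Corput's bound for the exponential sums $\sum_m e^{2\pi i\, h(m)}$ (here the conditions $x\vartheta'(x)\to 0$, $x^2\vartheta''(x)\to 0$ and their $\calL$-analogues furnish the control on the derivatives of $h$ that the method needs). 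One then transfers this, by summation by parts, to the number of $m$ for which $h(m+k) - h(m)$ lies, modulo $1$, in the interval determined by $x$ (an interval whose endpoints vary slowly with $m$), incurring a discrepancy error that is acceptable because $\varphi(N)$ is comparable to a fixed power of $N$. Carrying out this equidistribution step uniformly in $x$ and in the short steps $k$, and checking that it assembles with the convexity estimate above into the total bound $\calO\big(\varphi(N)^2/N\big)$, is the technical core of the proof.
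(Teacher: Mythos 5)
Your reduction to the counting problem and the first half of your plan (fix the step $k=m_1-m_2$, use monotonicity of $m\mapsto h(m+k)-h(m)$ to bound $\calN_k$, then sum over the admissible $k$) is the natural dual of the paper's count, which instead fixes $m$ and counts admissible steps $s$. But what you submit is not a proof: the part you yourself call ``the technical core'' --- the short-step regime where $k\,L(\varphi(N))\vartheta(\varphi(N))$ stays bounded --- is left entirely open, and this gap cannot be closed, because in that regime the asserted bound $K_N*\check K_N(x)\le C/N$ is false at $c=1$. Take $h(x)=x\log x$ (the paper's flagship example) and $k=1$. For $m$ with $\lfloor h(m)\rfloor\in(N/4,N]$ one has $h(m+1)-h(m)=\log m+1+O(1/m)$, and since $\varphi(N)/\varphi(N/4)\to 4$, all these real gaps lie in an interval of length $\log 4+o(1)<1.4$. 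Consequently the integer gaps $\lfloor h(m+1)\rfloor-\lfloor h(m)\rfloor$ take at most four distinct values, so by pigeonhole a single value $x^*$ (of size about $\log N$, hence $0<x^*\le\varphi(N)$) is attained for $\gtrsim\varphi(N)$ values of $m$ in the block. Each such $m$ gives a pair $n,\,n+x^*\in\bfB\cap(N/4,N]$, whence $K_N*\check K_N(x^*)\gtrsim 1/\varphi(N)\simeq(\log N)/N$, which is not $O(1/N)$. Equidistribution of the fractional parts $\{h(m)\}$, the input you propose for this regime, cannot rescue the estimate: it would only confirm that each of the few available integer values is attained a positive proportion of the time, which is precisely the obstruction.

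So the missing step is not a routine verification but the place where the statement itself breaks down, and no refinement of the convexity-plus-discrepancy scheme can give \eqref{eq:3} uniformly in $0<\abs{x}\le\varphi(N)$. For comparison, the paper's own argument slides over the same point: having fixed $m$, it asserts that the number $s_2-s_1+1$ of admissible steps is $\simeq\varphi(N)/N$, but this quantity is a nonnegative integer, hence at least $1$ whenever a solution exists, and the discarded ``$+1$'', summed over the $\simeq\varphi(N)$ values of $m$, only yields the bound $1/\varphi(N)$ --- which the example above shows is sharp when $x$ is comparable to the typical gap $h'(\varphi(N))\simeq L(\varphi(N))$. In short, your diagnosis of where the difficulty sits is more accurate than the paper's treatment of it, but neither your route nor the paper's establishes the lemma; any proof of the weak type $(1,1)$ bound at $c=1$ would have to dispense with the pointwise estimate $K_N*\check K_N(x)\le C/N$ in the range $\abs{x}\simeq h'(\varphi(N))$.
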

\begin{proof}
	Since $K_N * \check{K}_N$ is symmetric, we can restrict attention to $0 < x <  \vphi(N)$. Then
	\begin{align}
		\nonumber
		K_N * \check{K}_N(x) 
		&=
		\frac{1}{(\#\bfB_N)^2} \sum_{N/4 < n, m \leq N} \delta_n * \delta_{-m}(x) \ind{\bfB}(n) \ind{\bfB}(m)\\
		\nonumber
		&=
		\frac{1}{(\#\bfB_N)^2} \sum_{N/4 < n, m \leq N} \delta_{n-m} ( x ) \ind{\bfB}(n) \ind{\bfB}(m)\\
		\label{eq:2}
		&=
		\frac{1}{(\#\bfB_N)^2} \sum_{N/4 < n, n+x \leq N}  \ind{B}(n) \ind{B}(x+n).
	\end{align}
	Hence, our aim is to estimate the cardinality of the set $M(x, N)$ where
	\[
		M(x, N) = \left\{
		\tfrac{1}{4} N < n \leq N : \tfrac{1}{4} N < n+x \leq N \text{ and }  n, n+x \in \bfB
		\right\}.
	\]
	Let us notice that, $n \in \bfB_N \setminus \bfB_{N/4}$ implies that there is $m \in \NN$ so that
	$n = \lfloor h(m) \rfloor$, thus
	\[
		\tfrac{1}{4} N < h(m) \leq N+1,
	\]
	that is
	\[
		\vphi(N/4) < m \leq \vphi(2N).
	\]
	Next, $n + x \in \bfB_N \setminus \bfB_{N/4}$, translates into
	\[
		n + x = \lfloor h(k) \rfloor,
	\]
	for some $k \in \NN$ satisfying
	\[
		\vphi(N/4) < k \leq \vphi(2N).
	\]
	Hence,
	\begin{align*}
		x \leq h(k) - n \leq h(k) - h(m) + 1
	\end{align*}
	and
	\begin{align*}
		x > h(k) - n - 1 \geq h(k) - h(m) - 1.
	\end{align*}
	Therefore, the cardinality of $M(x, N)$ is bounded by the number of pairs $(k, m) \in \NN^2$ so that
	\begin{equation}
		\label{eq:1}
		\left\{
		\begin{aligned}
			\vphi(N/4) \leq m, k \leq \vphi(2N), \\
			x-1 \leq h(k) - h(m) < x+1.
		\end{aligned}
		\right.
	\end{equation}
	To improve the counting in \cite[Lemma 5.1]{mm1}, for a given $m \in \NN$ satisfying
	\[
		\vphi(N/4) \leq m \leq \vphi(2N),
	\]
	we estimate the number of $0 < s \leq \vphi(2N)$ so that
	\[
		x-1 \leq h(m+s) - h(m) \leq x+1.
	\]
	Let $g(s) = h(m+s) - h(m)$. Observe that $g$ is unbounded, increasing and $g(0) = 0$. Therefore, there are
	$s_1 \leq s_2$ such that
	\[
		g(s_1-1) < x-1 \leq g(s_1) \leq g(s_2) \leq x+1 < g(s_2+1).
	\]
	Our aim is to estimate $s_2 - s_1+1$. Let us compute the difference $g(s+1) - g(s)$. By the mean value theorem
	there is $\xi \in [0, 1]$ such that
	\begin{align*}
		g(s+1)-g(s) &= h(m+s+1) - h(m) - h(m+s) + h(m) \\
		&= h(m+s+1) - h(m+s) \\
		&= h'(m+s+\xi).
	\end{align*}
	Hence,
	\footnote{We write $A \simeq B$ if there are absolute constants $C_2 \geq C_1 > 0$ such that
	$C_1 A \leq B \leq C_2 A$.}
	\[
		g(s+1)-g(s) \simeq h'\big(\vphi(N)\big) \simeq \frac{N}{\vphi(N)},
	\]
	where the last estimate follows by \cite[Lemma 2.1]{mm1}. Thus
	\[
		s_2 - s_1 + 1 \simeq \frac{\vphi(N)}{N},
	\]
	and so 
	\[
		\#(M(x, N)) \leq C \vphi(N) \frac{\vphi(N)}{N}.
	\]
	for some $C > 0$. Now, by \eqref{eq:2}, 
	\[
		K_N * \check{K}_N(x) \leq \frac{C}{(\#\bfB_N)^2} \frac{\vphi(N)^2}{N},
	\]
	which together with $\#\bfB_N \simeq \vphi(N)$, leads to \eqref{eq:3}, and the lemma follows.
\end{proof}

\begin{bibliography}{discrete}
        \bibliographystyle{amsplain}
\end{bibliography}

\end{document}